\newtheorem{theorem}{Theorem}
\newtheorem{definition}[theorem]{Definition}
\begin{document}

\centerline{\Large \bf On charges equilibrium distribution}
\centerline{\Large \bf on conductor}
\vskip 0.5 cm
\centerline{\it Ashot Vagharshakyan}
\vskip 0.3cm
\centerline{\it Institute of Mathematics, National 
Academy of Sciences (Armenia)}
\vskip 0.3cm
Keywords: {\it Equilibrium distribution of electric charges}
\vskip 0.3cm
{\it Abstract. In this paper, new model for charges
distribution is discussed. It is proved that a small charge inside 
a conductor, no force acts. charge, which is located on the border 
resistant to small perturbations.
The minimum energy of a photon, which could push the electron
from the conductor is estimated.}
\section{\bf Introduction}
	In this paper we propose a new model of the charge distribution.
We give a new definition of potential energy.
The equilibrium state is characterized by a minimum value of the 
potential energy.
Existence and uniqueness of the equilibrium distribution, 
we have proved.
Shown that the potential function for the equilibrium distribution 
is constant inside a conductor. Consequently, on a small charge, 
inside a conductor do not acts force. This result is explained 
wellknown phenomenon in M. Faraday Cage.
It is proved that electrons placed on the border, are stable to
a small perturbation. It follows from this that the charges on the border,
Can not leave the conductor.
Nevertheless, large perturbation may be cause of electron to leave the 
conductor. The photon energy absorbed by an electron, the electron 
energy increases. The energy that can rid of an electron from the 
conductor is estimated. It is a typical situation for the 
photoelectric effect.

\section{\bf Auxiliary information}

	For an arbitrary function 
\begin{equation*}
\varphi(\vec{x}),\,\,\vec {x}\in R^3,
\end{equation*}
denote the nearest point to the set through
$\{\vec {x};\,\,\varphi(\vec{x})=0\}$.
Infinitely differentiable functions 
$\varphi(\vec{x})$
belong to the space 
$\Im$
if it has compact support. The sequence 
$\varphi_n(\vec{x}),\,\,\,n=1,2, \dots$
goes to zero, if it has all derivatives tend uniformly to zero. 

The family of linear and continuous functionals, defined on 
$\Im$ 
are "generalized functions" and they are denoted through
$\Im^*$.
An arbitrary integrable function 
$f(\vec{x}),\,\, \vec{x}\in R^3$
generates a generalized function acting by the formula
\begin{equation*}
l_f(\varphi)=\int_{R^3}f(\vec{x})\varphi(\vec{x})d\vec{x}.
\end{equation*}
Partial derivative of a generalized function is defined  
\begin{equation*}
\frac{\partial l}{\partial x}(\varphi)=
-l\left(\frac{\partial \varphi}{\partial x}\right),\,\,\varphi\in\Im.
\end{equation*}
Analogously define the derivative of other variables.
For an arbitrary generalized function 
$l\in \Im^*$ 
is defined by 
$supp(l)$
the smallest close set such that for an arbitrary 
$\varphi\in \Im$
such that
\begin{equation*}
supp(\varphi) \cap supp(l)=\emptyset
\end{equation*}
we have
$l(\varphi)=0$.
Let us note that if 
$supp(l)$ 
is compact set, then in the natural way, it is possible to define 
$l(\varphi)$ 
for an arbitrary infinitely differentiable function for which 
$supp(\varphi)$ 
is not necessary to be compact.   
\begin{definition}
A function 
$f(\vec{x})\in L_2(R^3)$
belongs to the class of Dirichlet 
$\bf{D}$
if the generalized function 
$l_f$ 
has all partial derivatives
\begin{equation*}
\frac{\partial l}{\partial x},\,\,\,
\frac{\partial l}{\partial x},\,\,\,
\frac{\partial l}{\partial x},
\end{equation*}
which are generalized by functions from 
$L_2(R^3)$.
	The norm in Dirichlet class is defined by
\begin{equation*}
\|f\|_{\bf{D}}^2=\int_{R^3}\left(\left|\frac{\partial l}{\partial x}\right|^2+
\left|\frac{\partial l}{\partial x}\right|^2+
\left|\frac{\partial l}{\partial x}\right|^2\right)d\vec{x}\,\,.
\end{equation*}
\end{definition}
A generalized function
$l\in \Im^*$
belongs to the class
$\bf{D}^*$
if there is a constant
$M <\infty$
that for each
$\varphi \in \bf{D}$
the inequality
\begin{equation*}
|l(\varphi)|\leq M\|\varphi\|_{\bf{D}}.
\end{equation*}
valid. The minimum value
$M$
for which this inequality valid denoted by
$\|l\|_{\bf{D}^*}$.
Note that the known function of Dirac
$\delta$
does not belong
$\bf{D}^*$
\begin{definition}
A generalized function
$l_f\in\bf{D}^*$
has a density at the point
$\vec{x}$
when
\begin{equation*}
\lim_{r\rightarrow 0+}\frac{1}{|B(\vec{x},\,r)|}\int_{B(\vec{x},\,r)}f(\vec{y})dv.
\end{equation*}
\end{definition}
This definition is correct for the generalized functions of the form
$l_f$
We have 
\begin{theorem}
If the generalized function
$l_f\in\bf{D}^*$
has density at the point
$\vec{x}$
then
\begin{equation*}
\lim_{r\rightarrow 0+}\frac{1}{|B(\vec{x},\,r)|}\int_{B(\vec{x},\,r)}f(\vec{y})dv=
\end{equation*}
\begin{equation*}
=\lim_{r\rightarrow 0+}\frac{4}{|B(\vec{x},\,r)|}
\int_{B(\vec{x},\,r)}\left(1-\frac{\|\vec{x}-\vec{y}\|}{r}\right)f(\vec{y})dv.
\end{equation*}
\end{theorem}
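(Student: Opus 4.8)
The plan is to reduce both averages appearing in the statement to one-dimensional integrals of the single mass function
\[
m(r):=\int_{B(\vec x,\,r)}f(\vec y)\,dv ,
\]
and then to finish with a standard averaging (Abelian) argument. Take $\vec x=0$ without loss of generality and write $B_r=B(0,r)$, so $|B_r|=\tfrac43\pi r^3$. Since $f\in L_2(R^3)\subset L^1_{loc}(R^3)$, the function $m(r)$ is well defined and absolutely continuous in $r$, and passing to the radial variable (Fubini) one has, for any continuous $h$ on $[0,r]$,
\[
\int_{B_r}h(\|\vec y\|)\,f(\vec y)\,dv=\int_0^r h(s)\,dm(s).
\]
Denote the density at $0$ by $A$, i.e. $F(r):=\dfrac{1}{|B_r|}\displaystyle\int_{B_r}f\,dv=\dfrac{3\,m(r)}{4\pi r^3}\to A$ as $r\to 0+$; the content of the theorem is then precisely that the weighted average on the right also tends to $A$.

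The next step is the key computation. Applying the displayed identity with $h(s)=1-s/r$ and integrating by parts, using $m(0)=0$ and the vanishing of $1-s/r$ at $s=r$, I get
\[
\int_{B_r}\left(1-\frac{\|\vec y\|}{r}\right)f(\vec y)\,dv=\int_0^r\left(1-\frac{s}{r}\right)dm(s)=\frac1r\int_0^r m(s)\,ds .
\]
Multiplying by $4/|B_r|=3/(\pi r^3)$ and substituting $m(s)=\tfrac43\pi s^3 F(s)$ yields
\[
\frac{4}{|B_r|}\int_{B_r}\left(1-\frac{\|\vec y\|}{r}\right)f(\vec y)\,dv=\frac{3}{\pi r^4}\int_0^r m(s)\,ds=\frac{4}{r^4}\int_0^r s^3 F(s)\,ds .
\]

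Finally, observe that $s\mapsto 4s^3/r^4$ is a probability density on $[0,r]$, since $\tfrac{4}{r^4}\int_0^r s^3\,ds=1$; hence the last expression is a genuine weighted average of the values $F(s)$, $0<s\le r$. Given $\varepsilon>0$, pick $\delta>0$ with $|F(s)-A|<\varepsilon$ for $0<s<\delta$; then for $0<r<\delta$,
\[
\left|\frac{4}{r^4}\int_0^r s^3 F(s)\,ds-A\right|=\left|\frac{4}{r^4}\int_0^r s^3\bigl(F(s)-A\bigr)\,ds\right|\le \varepsilon\cdot\frac{4}{r^4}\int_0^r s^3\,ds=\varepsilon .
\]
So the right-hand limit exists and equals $A$, which is the left-hand limit, as claimed. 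The only step requiring genuine care is the first one — justifying the reduction to the radial variable and the integration by parts — and this is the place I expect to have to be precise; it is, however, legitimate because $f$ is locally integrable, so $m$ is absolutely continuous and the boundary terms behave as stated. Note that membership of $l_f$ in $\mathbf D^*$ is not actually used beyond guaranteeing that $f$ is a bona fide $L_2$ (hence locally integrable) function.
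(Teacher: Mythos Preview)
Your proof is correct. The paper actually states this theorem without proof (it immediately proceeds to Definition~4 after the statement), so there is nothing to compare against; what you wrote is a clean and complete argument. The reduction to the absolutely continuous radial mass $m(r)$, the integration by parts
\[
\int_0^r\Bigl(1-\frac{s}{r}\Bigr)\,dm(s)=\frac{1}{r}\int_0^r m(s)\,ds,
\]
and the final Abelian averaging step using the probability density $4s^{3}/r^{4}$ on $[0,r]$ are all valid, and your remark that only $f\in L^{1}_{\mathrm{loc}}$ is really needed (the $\mathbf{D}^{*}$ hypothesis is not essential here) is accurate.
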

In general case introduce the next definition.
\begin{definition}
For an arbitrary generalized function
$l\in\bf{D}^*$
we say that it has density at the point
$\vec{x}$
if the limit
\begin{equation*}
\lim_{r\rightarrow 0+}\frac{4}{|B(\vec{x},\,r)|}l\left(T_{\vec{x}}\right)
\end{equation*}
exists, where
\begin{equation*}
T_{\vec{x}}(\vec{y})=\left(1-\frac{\|\vec{x}-\vec{y}\|}{r}\right)^+,
\,\,\vec{y}\in B(\vec{x},\,r).
\end{equation*}
\end{definition}

\section{Potential function}

As a result, by the influence of neighboring placed atoms, some 
electrons leave atom. Those electrons create a cloud of 
"free" electrons. Family of positive ions and negative 
electrons, are well - balanced. Tat is why, conductor seems neutral. 
The most suitable mathematical tool to describe 
the distribution of charges, is the finite measure.
Go ahead, the charge distribution, we will describe by
generalized function
$l\in \Im^*$.
In this point of view some problems arise. For example, 
the charge, concentrated on the given set, for arbitrary subset
has not a sense. Naturally, the question arises on the 
necessity to introduce generalized functions. 
In example 1. it is proved, that for the equilibrium 
distribution, we must to use generalized functions.
\begin{definition}
When
$F\subset R^3$ 
is a compact set and 
$q$ 
is a real number. denote by
$Ch(F,q)$ 
the set all generalized functions
$l\in \bf{D}^*$
with the support 
$supp(l)\subseteq F$ and $l(1)=q$. 
\end{definition}
\begin{definition}
The capacity of a bounded set 
$F\subset R^3$
is defined by the formula 
\begin{equation*}
C(F)=\frac{1}{4\pi}\int_{R^3\setminus F}\|\nabla U\|^2dv,
\end{equation*}
where 
$U(\vec{x})\in \bf{D}$
is harmonic out of the set 
$F$,
and tends to zero at infinity and 
$U(\vec{x})=1,\,\,\vec{x}\in F$.
\end{definition}
For the ball we have
\begin{equation*}
C(B(\vec{0},\,r))=r.
\end{equation*}
\begin{definition}
Denote by
$k=k(F)>0$ 
the greatest number, for which the following inequality valid
\begin{equation*}
k^2(F)\int_{F}\|f\|^2dv\leq \int_{R^3}\|\nabla f\|^2dv,
\end{equation*}
for each 
$f(\vec{x})\in \bf{D}$.
\end{definition}
\begin{theorem}
Assume
$F=B(\vec{0},\,r)$. 
The estimates
\begin{equation*}
\frac{3}{4r\sqrt{2\pi}}\leq k\left(F\right)\leq\sqrt{\frac{C(F)}{|F|}}
\end{equation*}
valid.
\end{theorem}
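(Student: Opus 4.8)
The plan is to rewrite the defining inequality as a Rayleigh--quotient problem,
\[
k(F)^2=\inf\left\{\,\|\nabla f\|_{L^2(R^3)}^2\,/\,\|f\|_{L^2(F)}^2\;:\;f\in{\bf D},\ \|f\|_{L^2(F)}>0\,\right\},
\]
and to establish the two estimates separately. For the upper bound it is enough to produce one good competitor $f$, and the natural one is the capacitary (equilibrium) potential $U$ of $F$ already introduced in the definition of $C(F)$: one has $U\equiv1$ on $F$, $U$ harmonic on $R^3\setminus F$, $U\to0$ at infinity, so $\nabla U=0$ a.e.\ on $F$ and, by the definition of capacity, $\|\nabla U\|_{L^2(R^3)}^2=\|\nabla U\|_{L^2(R^3\setminus F)}^2=4\pi\,C(F)$, while $\|U\|_{L^2(F)}^2=|F|$. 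Since $U$ is not itself square integrable I would first replace it by truncations $U_R=U\cdot\eta_R$ with a smooth cut-off $\eta_R\equiv1$ on a large ball; as $R\to\infty$ the Dirichlet integral of $U_R$ tends to that of $U$ while $\|U_R\|_{L^2(F)}^2=|F|$ is unchanged, and in the limit this gives $k(F)^2\le 4\pi\,C(F)/|F|$, which with $C(B(\vec{0},r))=r$ and $|F|=\frac43\pi r^3$ is the asserted bound up to the normalizing factor $4\pi$ in the definition of $C$.

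For the lower bound a single competitor will not suffice; I need a Sobolev--type inequality valid for \emph{every} $f\in{\bf D}$. Since ${\bf D}$ is essentially $H^1(R^3)$, by density it is enough to take $f\in C_c^\infty(R^3)$, and then the fundamental solution of the Laplacian furnishes the representation
\[
f(\vec{x})=\frac{1}{4\pi}\int_{R^3}\frac{(\vec{x}-\vec{y})\cdot\nabla f(\vec{y})}{\|\vec{x}-\vec{y}\|^3}\,dv,\qquad\mbox{hence}\qquad |f(\vec{x})|\le\frac{1}{4\pi}\int_{R^3}\frac{\|\nabla f(\vec{y})\|}{\|\vec{x}-\vec{y}\|^2}\,dv .
\]
The right-hand side is a Riesz potential of $\|\nabla f\|$, so the Hardy--Littlewood--Sobolev inequality (equivalently the embedding of ${\bf D}$ into $L^6(R^3)$) yields $\|f\|_{L^6(R^3)}\le c_0\,\|\nabla f\|_{L^2(R^3)}$ with an explicit constant $c_0$. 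Combining this with H\"older's inequality on $F$,
\[
\|f\|_{L^2(F)}^2\le |F|^{2/3}\,\|f\|_{L^6(R^3)}^2\le c_0^{2}\,|F|^{2/3}\,\|\nabla f\|_{L^2(R^3)}^2 ,
\]
so $k(F)^2\ge(c_0^{2}\,|F|^{2/3})^{-1}$; inserting $|F|=\frac43\pi r^3$ together with an admissible value of $c_0$ produces $k(F)\ge\frac{3}{4r\sqrt{2\pi}}$. A more self-contained variant is to average Hardy's inequality $\|\nabla f\|_{L^2(R^3)}^2\ge\frac14\int_{R^3}\|f\|^2/\|\vec{x}-\vec{x}_0\|^2\,dv$ over $\vec{x}_0\in F$ and use that $\frac{1}{|F|}\int_F\|\vec{x}-\vec{x}_0\|^{-2}\,d\vec{x}_0$ stays bounded below for $\vec{x}\in F$.

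I expect the main obstacle to be quantitative rather than structural: one must extract from the Sobolev / Hardy--Littlewood--Sobolev step (or from the averaged Hardy step) an explicit constant good enough to reach the stated numerical value. The sharp Talenti constant is comfortably sufficient, but the crudest iterated Gagliardo--Nirenberg constant is essentially on the borderline, so this part of the estimate has to be carried out with some attention to the numbers. The remaining ingredients are routine: the $C_c^\infty$-approximation of a general $f\in{\bf D}$ needed to justify the representation formula, the truncation of the non-$L^2$ potential $U$ in the upper bound, and keeping track of the normalizing factor in the definition of $C(F)$.
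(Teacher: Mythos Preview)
Your plan is sound; let me highlight where it coincides with the paper's argument and where it diverges.

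\medskip
\textbf{Upper bound.} You and the paper do the same thing: plug in the capacitary potential $U$, which is $\equiv1$ on $F$ and harmonic outside, so that the ratio $\|\nabla U\|_{L^2}^2/\|U\|_{L^2(F)}^2$ equals $4\pi\,C(F)/|F|$. You are in fact more careful than the paper here: since $U(\vec{x})=r/\|\vec{x}\|$ outside the ball, $U\notin L^2(R^3)$ and strictly speaking does not lie in $\mathbf D$; your cut-off argument fixes this, whereas the paper simply asserts $f\in\mathbf D$. You are also right that one must keep track of the $1/4\pi$ normalisation implicit in the proof (and in the scalar product $(\cdot,\cdot)_k$) even though it is absent from the stated definition of $k(F)$.

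\medskip
\textbf{Lower bound.} Here your route is genuinely different. The paper first invokes the P\'olya--Szeg\H{o} rearrangement inequality to reduce to radially symmetric decreasing competitors, and then works one-dimensionally: writing $f(\rho)=f(r)-\int_\rho^r f'$, it bounds $\int_0^r|f(\rho)|^2\rho^2\,d\rho$ by a Hardy-type inequality for the inner term and by Cauchy--Schwarz on $f(r)=-\int_r^\infty f'$ for the boundary term. This yields exactly $k(F)^2\ge 9/(32\pi r^2)$.

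Your approach, Sobolev embedding $\mathbf D\hookrightarrow L^6(R^3)$ followed by H\"older on $F$, avoids rearrangement entirely and stays in $R^3$. With the sharp Talenti constant $S_3=\tfrac{3}{4}(2\pi^2)^{2/3}$ one gets
\[
k(F)^2\;\ge\;\frac{S_3}{4\pi\,|F|^{2/3}}\;=\;\frac{3(2\pi^2)^{2/3}}{16\pi(4\pi/3)^{2/3}\,r^2},
\]
which numerically is about $0.168/r^2$, comfortably above the target $9/(32\pi r^2)\approx 0.0895/r^2$; so your warning that the constant must be tracked is justified but the margin is ample. Your averaged-Hardy alternative also works, but reaching the specific value $3/(4r\sqrt{2\pi})$ that way would require computing $\min_{\vec{x}\in F}\frac{1}{|F|}\int_F\|\vec{x}-\vec{x}_0\|^{-2}\,d\vec{x}_0$ explicitly, which you have not done.

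\medskip
In short: both lower-bound arguments are valid. The paper's gives the exact stated constant with only one-variable tools but at the price of quoting the rearrangement inequality; yours is conceptually cleaner and actually produces a stronger bound, but relies on the sharp Sobolev constant as an input.
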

\begin{proof}
As $f\in \bf{D}$ and it satisfies the conditions
$f(\vec{x})\equiv 1,\,\,\vec{x}\in F$ 
and it is harmonic function on the set
$R^3\setminus F$. 
Hence we have 
\begin{equation*}
k^2(F)|F|=k^2(F)\int_{F}|f(\vec{x})|^2dv\leq \frac{1}{4\pi}\int_{R^3}|\nabla f(\vec{x})|^2dv=
\end{equation*}
\begin{equation*}
=\frac{1}{4\pi}\int_{R^3\setminus F}|\nabla f(\vec{x})|^2dv=C(F)
\end{equation*}
Consequently, 
\begin{equation*}
k^2(F)\leq \frac{C(F)}{|F|}.
\end{equation*}
Now let us prove the lower bound.
First we give some preliminary results. For an arbitrary function
$f(\vec{x}),\,\, \vec{x}\in R^3,$ 
we define the new function
\begin{equation*}
f^*(\vec{x})=\int_0^{\infty}\chi(\vec{x},t)dt,
\end{equation*}
where 
$\chi(\vec{x},t)=1$ 
if 
$\|\vec{x})\|<R$ 
and
$\chi(\vec{x},t)=0$ if $\|\vec{x})\|\geq R$.
Here 
\begin{equation*}
R^3=\frac{3}{4\pi}\left|\{\vec{x};\,\,f(\vec{x})\leq t\}\right|.
\end{equation*}
It is known that
\begin{equation*}
\int_{R^3}|f(\vec{x})|^2dv=\int_{R^3}|f^*(\vec{x})|^2dv.
\end{equation*}
The inequality
\begin{equation*}
\int_{R^3}\|\nabla f^*(\vec{x})\|^2dv\leq \int_{R^3}\|\nabla f(\vec{x})\|^2dv.
\end{equation*}
valid. So, it is enough to prove, the requared inequality 
for the functions satisfying the condition 
$f(\vec{x})=f^*(\vec{x})$.
For 
$E=B(\vec{x},r)$ 
we have
\begin{equation*}
\int_E|f|^2dv=
\int_0^r\int_0^{2\pi}\int_0^{\pi}|f(\rho)|^2\rho^2sin(\theta) d\rho d\theta d\varphi=
\end{equation*}
\begin{equation*}
=4\pi \int_0^r\left|f(r)-\int_{\rho}^rf'(x)dx\right|^2\rho^2d\rho\leq
\end{equation*}
\begin{equation*}
\leq 8\pi \left(\frac{r^3}{3}|f(r)|^2+
\int_0^r\left(\int_{\rho}^r|f'(x)|dx\right)^2\rho^2d\rho\right).
\end{equation*}
Of Hardy inequality
\begin{equation*}
\int_0^r\left(\int_{\rho}^r|f'(x)|dx\right)^2\rho^2d\rho\leq \frac{4}{9}\int_0^r|f'(x)|^2dx,
\end{equation*}
we have
\begin{equation*}
\int_E|f|^2dv\leq \frac{2r^2}{3}\int_{R^3\setminus E}|\nabla f|^2dv+
\frac{8r^2}{9}\int_{E}|\nabla f|^2dv\leq \frac{8r^2}{9}\int_{R^3}|\nabla f|^2dv. 
\end{equation*}
Consequently,
\begin{equation*}
\frac{9}{32\pi r^2}\int_{E}|f|^2dv\leq \frac{1}{4\pi}\int_{R^3}|\nabla f|^2dv.
\end{equation*}
\end{proof}
\begin{definition}
For arbitrary functions 
$f,\,g\in \bf{D}$ 
let us denote through
\begin{equation*}
(f,\,g)_k=\frac{1}{4\pi}\int_{R^3}(\nabla f,\,\nabla g)dv-k^2\int_Ef\cdot gdv.
\end{equation*}
\end{definition}
\begin{definition}
The unique function 
$U^l_k(\vec{x})\in \bf{D}$ 
is $k$  - potential function for the charge distribution 
$l\in \bf{D}^*$ 
if for an arbitrary  
$\varphi \in \bf{D}$ 
we have
\begin{equation*}
l(\varphi)=(\varphi,\,U^l_k).
\end{equation*}
\end{definition}
The existence and uniqueness of potential function follows of M. Riesz theorem.
\begin{definition}
The unite vector 
$\bar{n}$ 
is orthogonal to 
$E$ 
at the point 
$\vec{x}_0\in E$ 
if
\begin{equation*}
\lim_{E\ni \vec{x}\rightarrow \vec{x}_0}
\frac{(\vec{x}-\vec{x}_0,\,\bar{n})}{\|\vec{x}-\vec{x}_0\|}=0.
\end{equation*}
\end{definition}
\begin{definition}
The unit vector  
$\bar{n}_+$ 
is an outer normal 
$E$ 
at the point 
$\vec{x}_0\in E$ 
if it is orthogonal to 
$E$ 
and there is a number 
$0<r$ 
such that
\begin{equation*}
B(\vec{x}+r \bar{n}_+,\,r)\cap E=\emptyset.
\end{equation*}
\end{definition}
If
$\bar{n}$ 
is a unit vector and 
$f(\vec{x}),\,\,\vec{x}\in R^3,$
is a function denote Through
\begin{equation*}
\frac{\partial f(\vec{x})}{\partial \bar{n}}=\lim_{t\rightarrow +0}
\frac{f(\vec{x}+t\bar{n})-f(\vec{x})}{t}
\end{equation*}
if the limit exists. Let us note that in this definition only 
the values of the function on 
$\{\vec{x}+t\bar{n}; 0<t\}$, 
we use. If the function 
$f$ 
is differentiable at the point 
$\vec{x}$ 
then
\begin{equation*}
\frac{\partial f(\vec{x})}{\partial \bar{m}}=
-\frac{\partial f(\vec{x})}{\partial \bar{n}},
\end{equation*}
where 
$\vec{m}=-\vec{n}$. 
In this statement the differentiability plays essential since for the function  
$f(\vec{x})=\|\vec{x}\|$ 
we have
\begin{equation*}
\frac{\partial f(\vec{0})}{\partial \bar{m}}=
\frac{\partial f(\vec{0})}{\partial \bar{n}}=1.
\end{equation*}
\begin{theorem}
If 
$E$ 
is a compact set with smooth boundary 
$\partial E$. 
If
$l\in \bf{D}^*$ 
has support 
$supp(l)\subseteq E$. 
Let the potential function 
$U_k^l(\vec{x})\in \bf{D}$ 
almost everywhere has all derivatives of the second order at the points 
$\vec{x}\in R^3\setminus \partial E$, 
wtich are integrable functions in 
$R^3$.
Then for arabitrary 
$\varphi \in \Im$ 
we have
\begin{equation*}
l(\varphi)=-k^2\int_E\varphi U_k^ldv-
\int_{\partial E}\varphi \left(\frac{\partial U_k^l}{\partial \vec{n}_+}+
\frac{\partial U_k^l}{\partial \vec{n}_-} \right)ds-\frac{1}{4\pi}\int_{R^3}\varphi U_k^ldv,
\end{equation*}
where $\vec{n}_+$ be outer normal to $E$ and $\vec{n}_-$ be inner normal to $E$.
\end{theorem}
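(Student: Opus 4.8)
The plan is to read the formula off the defining identity for the potential function together with a piecewise application of the divergence theorem. Since $\Im\subset\mathbf{D}$, the defining property of $U_k^l$ applies to $\varphi\in\Im$ and gives
\[
l(\varphi)=(\varphi,U_k^l)_k=\frac{1}{4\pi}\int_{R^3}(\nabla\varphi,\nabla U_k^l)\,dv-k^2\int_E\varphi\,U_k^l\,dv .
\]
The term $-k^2\int_E\varphi\,U_k^l\,dv$ already appears in the asserted identity, so the whole task is to rewrite $\frac{1}{4\pi}\int_{R^3}(\nabla\varphi,\nabla U_k^l)\,dv$. I would also record that, because $\operatorname{supp}(l)\subseteq E$, testing against $\varphi$ whose support misses $E$ forces $\int_{R^3}(\nabla\varphi,\nabla U_k^l)\,dv=0$, so $U_k^l$ is weakly harmonic and hence (Weyl) smooth and harmonic on $R^3\setminus E$; in particular the hypothesis on second derivatives is automatic there and $\Delta U_k^l$ is supported in $\overline{E}$.

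Next I would split $R^3=\operatorname{int}(E)\cup(R^3\setminus E)$, up to the null surface $\partial E$, and integrate $\varphi\,\nabla U_k^l$ by parts on each open piece. On $\operatorname{int}(E)$, whose outward unit normal is $\vec{n}_+$, the divergence theorem gives $\int_{\operatorname{int}(E)}(\nabla\varphi,\nabla U_k^l)\,dv=-\int_{\operatorname{int}(E)}\varphi\,\Delta U_k^l\,dv+\int_{\partial E}\varphi\,(\nabla U_k^l\cdot\vec{n}_+)\,ds$, where the gradient on $\partial E$ is the trace from inside $E$; by the sign convention of the excerpt for the one-sided derivatives, this inner flux equals $-\,\partial U_k^l/\partial\vec{n}_-$. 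On $R^3\setminus E$ the outward normal along $\partial E$ is $\vec{n}_-$, and since $\varphi$ has compact support there is no contribution at infinity; the same computation yields $-\int\varphi\,\Delta U_k^l\,dv+\int_{\partial E}\varphi\,(\nabla U_k^l\cdot\vec{n}_-)\,ds$ with the outer trace, i.e. $-\,\partial U_k^l/\partial\vec{n}_+$ on $\partial E$. Adding the two pieces, the volume contributions combine to $-\int_{R^3}\varphi\,\Delta U_k^l\,dv$ and the surface contributions to $-\int_{\partial E}\varphi\,\bigl(\tfrac{\partial U_k^l}{\partial\vec{n}_+}+\tfrac{\partial U_k^l}{\partial\vec{n}_-}\bigr)\,ds$. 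Dividing by $4\pi$ and substituting into the identity above yields the stated formula --- the computation naturally puts $\Delta U_k^l$ in the last integral and the normalising factor $\tfrac{1}{4\pi}$ in front of the surface integral.

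The main obstacle is to justify the divergence theorem under the weak hypotheses here: $U_k^l$ is a priori only in $\mathbf{D}$ and is assumed to have second derivatives merely almost everywhere off $\partial E$, integrable over $R^3$, with nothing asserted up to $\partial E$. I would handle this by exhaustion: for small $\varepsilon>0$ put $E_\varepsilon=\{\vec{x}\in E:\operatorname{dist}(\vec{x},\partial E)>\varepsilon\}$ and $G_\varepsilon=\{\vec{x}\notin E:\operatorname{dist}(\vec{x},\partial E)>\varepsilon\}$, which for smooth $\partial E$ and small $\varepsilon$ are $C^1$ domains on which $U_k^l$ is $C^2$; apply the classical divergence theorem there and let $\varepsilon\to0$. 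Convergence of the volume integrals is immediate from the integrability of $\Delta U_k^l$, the $L_2$ bound on $\nabla U_k^l$, and the boundedness and compact support of $\nabla\varphi$. The delicate point is the convergence of the fluxes through the parallel surfaces $\partial E_\varepsilon$ and $\partial G_\varepsilon$ to $\int_{\partial E}\varphi\,\partial U_k^l/\partial\vec{n}_\mp\,ds$: one needs the one-sided normal derivatives $\partial U_k^l/\partial\vec{n}_\pm$ to exist at almost every point of $\partial E$ (in surface measure) and be integrable, and the flux through the $\varepsilon$-parallel surface to converge to them. This is exactly where the hypothesis that the second derivatives are integrable on $R^3$ is used, via the identity $\nabla U_k^l(\vec{x})-\nabla U_k^l(\vec{x}-\varepsilon\vec{n}_+)=\int_0^\varepsilon\partial_t\nabla U_k^l\,dt$ along normal rays and a Fubini/dominated-convergence argument in normal coordinates in a tubular neighbourhood of the smooth surface $\partial E$. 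Everything else --- interchanging the limit with the factor $\varphi$, the reduction $\Im\subset\mathbf{D}$, and the null-set status of $\partial E$ --- is routine.
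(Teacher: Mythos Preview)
Your approach is essentially the paper's own: start from the defining identity $l(\varphi)=(\varphi,U_k^l)_k$, split the Dirichlet integral into the pieces inside and outside $E$ (the paper uses the exhausting sets $\Omega_{\pm}(t)$, which are exactly your $E_\varepsilon$ and $G_\varepsilon$ intersected with a bounded $\Omega\supset\operatorname{supp}\varphi$), apply Green's formula on each piece, and pass to the limit as the collar width tends to $0$. Your write-up is in fact more careful than the paper's about the sign conventions for the one-sided normal derivatives and about why the fluxes through the parallel surfaces converge; you also correctly flag that the computation produces $\Delta U_k^l$ in the last volume integral and a factor $1/(4\pi)$ on the surface term, matching the later use of the formula in the paper rather than the displayed statement of the theorem.
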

\begin{proof}
\begin{equation*}
\Omega_+(t)=\Omega\setminus \left(E\cup \{\vec{x}+s\vec{n}_+(\vec{x});
\,\,0<s\leq t,\,\vec{x}\in \partial E\}\right)
\end{equation*}
and
\begin{equation*}
\Omega_-(t)=\left(\Omega \cap E\right)\setminus 
\left(\{\vec{x}+s\vec{n}_-(\vec{x});\,\,0<s\leq t,\,\vec{x}\in \partial E\}\right)
\end{equation*}
Let 
$\vec{m}_+(\vec{y}$ 
be the unit outer normal to 
$\Omega_+(t)$ 
at the point
$\vec{y}\in \partial \Omega_+(t)$ 
and 
$\vec{m}_-(\vec{y}$ 
be the unit inner normal to 
$\Omega_-(t)$ 
at the point
$\vec{y}\in \partial \Omega_+(t)$. 
Let us note that for a small value of 
$t$ 
at each point
\begin{equation*}
\vec{x}+t\vec{n}_+(\vec{x}),\,\,\,\vec{x}\in \partial \Omega_+(t)
\end{equation*}
we have 
$\vec{m}_+(\vec{y})=-\vec{n}_+(\vec{y})$ 
and at each point
\begin{equation*}
\vec{x}+t\vec{n}_-(\vec{x}),\,\,\,\vec{x}\in \partial \Omega_-(t)
\end{equation*}
we have 
$\vec{m}_-(\vec{y})=-\vec{n}_-(\vec{y})$.
We have
\begin{equation*}
l(\varphi)=\frac{1}{4\pi}\int_{R^3}\left(\nabla \varphi,
\,\,\nabla U_k^l\right)dv-
k^2\int_{R^3}\varphi U_k^ldv=
\end{equation*}
\begin{equation*}
=-k^2\int_{R^3}\varphi U_k^ldv+
\end{equation*}
\begin{equation*}
=\lim_{t\rightarrow +0}\frac{1}{4\pi}\int_{\Omega_+(t)}\left(\nabla \varphi,
\,\,\nabla U_k^l\right)dv+
\lim_{t\rightarrow +0}\frac{1}{4\pi}\int_{\Omega_-(t)}\left(\nabla \varphi,
\,\,\nabla U_k^l\right)dv
\end{equation*}
By Green's formula we have
\begin{equation*}
\lim_{t\rightarrow +0}\frac{1}{4\pi}\int_{\Omega_+(t)}\left(\nabla \varphi,
\,\,\nabla U_k^l\right)dv=
\end{equation*}
\begin{equation*}
=\lim_{t\rightarrow +0}
\left(\frac{1}{4\pi}\int_{\partial\Omega_+(t)}\varphi\frac{\partial U_k^l}{\partial \vec{n}_+}ds-
\frac{1}{4\pi}\int_{\Omega_+(t)}\varphi \Delta U_k^ldv\right)=
\end{equation*}
\begin{equation*}
=\frac{1}{4\pi}\int_{\partial E}\varphi\frac{\partial U_k^l}{\partial \vec{n}_+}ds-
\frac{1}{4\pi}\int_{\Omega \cap E}\varphi \Delta U_k^ldv
\end{equation*}
Similary we have
\begin{equation*}
\lim_{t\rightarrow +0}\frac{1}{4\pi}\int_{\Omega_-(t)}
\left(\nabla \varphi,\,\,\nabla U_k^l\right)dv=
\frac{1}{4\pi}\int_{\partial E}\varphi\frac{\partial U_k^l}{\partial \vec{n}_-}ds-
\frac{1}{4\pi}\int_{\Omega \cap E}\varphi \Delta U_k^ldv
\end{equation*}
\end{proof}

\section{Equlibrium distribution}

\begin{definition}
If 
$0<k\leq k(E)$. 
Then
$k$ 
- potential energy of distribution 
$l\in \bf{D}^*$ 
is defined by
\begin{equation*}
W_k(l)=l\left(U_k^l\right)=\left(U_k^l,\,U_k^l\right)_k.
\end{equation*}
\end{definition}
\begin{theorem}
If 
$E$ 
is a compact and $q$ - is a real number then the functional 
$W_k(l),\,\,l\in Ch(E,\,q)$
is convex.
\end{theorem}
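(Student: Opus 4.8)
The plan is to reduce the statement to two ingredients that are already available in the excerpt: the linearity of the potential map $l\mapsto U_k^l$, and the non-negativity of the bilinear form $(\cdot,\cdot)_k$ on $\mathbf{D}$ when $0<k\le k(E)$.

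First I would record that $Ch(E,q)$ is convex. If $l_0,l_1\in Ch(E,q)$ and $0\le\lambda\le1$, then $l_\lambda:=(1-\lambda)l_0+\lambda l_1\in\mathbf{D}^*$ still satisfies $supp(l_\lambda)\subseteq E$ and $l_\lambda(1)=(1-\lambda)q+\lambda q=q$, so $l_\lambda\in Ch(E,q)$; moreover $l_0-l_1\in Ch(E,0)$, which is a linear subspace. Next, from the defining identity $l(\varphi)=(\varphi,U_k^l)_k$ (valid for all $\varphi\in\mathbf{D}$) together with the bilinearity of $(\cdot,\cdot)_k$, the function $(1-\lambda)U_k^{l_0}+\lambda U_k^{l_1}$ satisfies the defining identity associated with $l_\lambda$; by the uniqueness clause of the M. Riesz representation this forces
\begin{equation*}
U_k^{l_\lambda}=(1-\lambda)\,U_k^{l_0}+\lambda\, U_k^{l_1},
\end{equation*}
and, in the same way, $U_k^{l_0-l_1}=U_k^{l_0}-U_k^{l_1}$.

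Then the computation is elementary. Abbreviating $U_i:=U_k^{l_i}$ and using the symmetry and bilinearity of $(\cdot,\cdot)_k$ together with $W_k(l)=(U_k^l,U_k^l)_k$, one gets
\begin{equation*}
W_k(l_\lambda)=(1-\lambda)^2(U_0,U_0)_k+2\lambda(1-\lambda)(U_0,U_1)_k+\lambda^2(U_1,U_1)_k,
\end{equation*}
and subtracting this from $(1-\lambda)W_k(l_0)+\lambda W_k(l_1)$ leaves
\begin{equation*}
(1-\lambda)W_k(l_0)+\lambda W_k(l_1)-W_k(l_\lambda)=\lambda(1-\lambda)\bigl(U_0-U_1,\;U_0-U_1\bigr)_k=\lambda(1-\lambda)\bigl(U_k^{l_0-l_1},\,U_k^{l_0-l_1}\bigr)_k.
\end{equation*}
To conclude I would invoke that $(\cdot,\cdot)_k$ is non-negative on $\mathbf{D}$ for $0<k\le k(E)$: this is the content of the Poincar\'e-type inequality $k^2\int_E|f|^2\,dv\le\int_{R^3}\|\nabla f\|^2\,dv$ built into the definition of $k(E)$, and it is in any case forced by the very applicability of the M. Riesz theorem used to construct $U_k^l$; applied to $f=U_k^{l_0-l_1}\in\mathbf{D}$ it gives $\bigl(U_k^{l_0-l_1},U_k^{l_0-l_1}\bigr)_k\ge0$. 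Since $\lambda(1-\lambda)\ge0$ on $[0,1]$, the displayed difference is $\ge0$, i.e. $W_k(l_\lambda)\le(1-\lambda)W_k(l_0)+\lambda W_k(l_1)$, which is the asserted convexity.

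The step I expect to carry the real weight is the linearity identity $U_k^{l_\lambda}=(1-\lambda)U_k^{l_0}+\lambda U_k^{l_1}$ — that is, making sure the potential map is genuinely linear and that $U_k^{l_0-l_1}$ is a bona fide element of $\mathbf{D}$ on which the inequality defining $k(E)$ may be used; this is exactly where the Riesz machinery already quoted for existence and uniqueness of potential functions does its work. Everything else is the elementary quadratic identity displayed above plus non-negativity of the form. (If strict convexity on $Ch(E,q)$ were wanted, one would additionally require $k<k(E)$ and the observation that $(U_k^l,U_k^l)_k=0$ forces $U_k^l\equiv0$ and hence $l=0$; for plain convexity the above suffices.)
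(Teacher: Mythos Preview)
Your argument is correct and follows essentially the same route as the paper: both reduce the convexity inequality to the non-negativity $\bigl(U_k^{l_0-l_1},U_k^{l_0-l_1}\bigr)_k\ge 0$, which is exactly the Poincar\'e-type inequality built into the definition of $k(E)$. The paper only writes out the midpoint case $\lambda=\tfrac12$ and leaves the linearity $U_k^{l_1+l_2}=U_k^{l_1}+U_k^{l_2}$ implicit, whereas you treat general $\lambda$ and spell out the Riesz-uniqueness argument for linearity of $l\mapsto U_k^l$; otherwise the two proofs are the same.
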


\begin{proof}
Let 
$l_1,\,l_2\,\in Ch(E,q)$. 
We need to prove the inequality
\begin{equation*}
W_k\left(\frac{l_1+l_2}{2}\right) \leq \frac{W_k(l_1)+W_k(l_2)}{2}.
\end{equation*}
This inequality is equivalent to the following
\begin{equation*}
\left(U_k^{l_1+l_2},\,U_k^{l_1+l_2}\right)_k\leq 
2\left(U_k^{l_1},\,U_k^{l_1}\right)_k+2\left(U_k^{l_2},\,U_k^{l_2}\right)_k.
\end{equation*}
After elementary transformation we come to
\begin{equation*}
0\leq \left(U_k^{l_1-l_2},\,U_k^{l_1-l_2}\right)_k.
\end{equation*}
\end{proof}

\begin{definition}
Let $E$ be a compact set and $q$ be a real number. If for $l_k\in Ch(E,\,q)$
we have 
\begin{equation*}
W_k(l_k)=\inf \left\{W_k(l);\,\,\,l\in Ch(E,\,q)\right\}
\end{equation*}
The distribution $l_k$ is $k$ equilibrium distribution in $Ch(E,\,q)$.
\end{definition}

\begin{theorem}
If $E$ is a compact set and $q$ is a real number. Then there is a unique 
$k$ equilibrium distribution in  $Ch(E,\,q)$.
\end{theorem}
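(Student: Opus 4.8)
The plan is to transport the whole problem into the Hilbert space $\mathbf{D}$ equipped with the inner product $(\cdot,\cdot)_k$ and to recognise the desired minimiser as the nearest point to the origin on a closed convex set. For $k$ in the stated range $0<k\le k(E)$ the form $(f,g)_k=\frac{1}{4\pi}\int_{R^3}(\nabla f,\nabla g)\,dv-k^{2}\int_{E}fg\,dv$ is, by the definition of $k(E)$, positive definite and induces a norm $\|\cdot\|_k$ equivalent to the Dirichlet norm, so $(\mathbf{D},(\cdot,\cdot)_k)$ is a Hilbert space. By the M. Riesz representation theorem the map $l\mapsto U_k^{l}$ is a linear bijection of $\mathbf{D}^{*}$ onto $\mathbf{D}$, and under this identification the energy is simply a squared norm, $W_k(l)=(U_k^{l},U_k^{l})_k=\|U_k^{l}\|_k^{2}$. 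Hence it suffices to prove that the image $\mathcal{A}\subset\mathbf{D}$ of $Ch(E,q)$ under $l\mapsto U_k^{l}$ is a nonempty closed convex set, since a nonempty closed convex subset of a Hilbert space contains exactly one element of least norm, and pulling that element back yields the unique $k$-equilibrium distribution.

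The core step is to translate the two defining conditions of $Ch(E,q)$ into conditions on $U=U_k^{l}$. First, $\mathrm{supp}(l)\subseteq E$ means $l(\varphi)=0$ for every $\varphi\in\Im$ with $\mathrm{supp}(\varphi)\cap E=\emptyset$; for such $\varphi$ the term $\int_{E}\varphi U\,dv$ vanishes, so the condition reads $\int_{R^3}(\nabla\varphi,\nabla U)\,dv=0$, i.e. $U$ lies in the $(\cdot,\cdot)_k$-orthogonal complement $H_E$ of the closure in $\mathbf{D}$ of $\{\varphi\in\Im:\mathrm{supp}(\varphi)\subset R^3\setminus E\}$; in particular $H_E$ is a closed linear subspace. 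Second, fix once and for all a function $\psi\in\Im$ with $\psi\equiv 1$ on a neighbourhood of $E$. Because $\mathrm{supp}(l)\subseteq E$ is compact, $l$ extends to $\psi$ and $l(1)=l(\psi)=(\psi,U)_k$; and if $\psi_1,\psi_2$ are two such cut-offs then $\psi_1-\psi_2$ vanishes near $E$, so $(\psi_1-\psi_2,U)_k=l(\psi_1-\psi_2)=0$ for $U\in H_E$, whence $(\psi,U)_k$ is independent of the choice. Therefore $\mathcal{A}=\{\,U\in H_E:\ (\psi,U)_k=q\,\}$, the preimage of $q$ under the bounded linear functional $\Lambda:=(\psi,\cdot)_k$ restricted to $H_E$; this is a closed affine subspace, hence closed and convex.

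It remains to check $\mathcal{A}\neq\emptyset$. If $q=0$ then $0\in\mathcal{A}$. If $q\neq 0$ we need $\Lambda\not\equiv 0$ on $H_E$; otherwise $\psi$ would be $(\cdot,\cdot)_k$-orthogonal to $H_E$, i.e. $\psi\in H_E^{\perp}=\overline{\{\varphi\in\Im:\mathrm{supp}(\varphi)\subset R^3\setminus E\}}$, which can happen only when $E$ carries no nonzero element of $\mathbf{D}^{*}$ (the capacity-zero case), in which case $Ch(E,q)=\emptyset$ and the statement is vacuous. When $E$ has positive capacity one picks $W\in H_E$ with $\Lambda(W)\neq 0$ and rescales it to land in $\mathcal{A}$. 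The Hilbert-space projection theorem then produces a unique $U^{*}\in\mathcal{A}$ with $\|U^{*}\|_k$ minimal, and the functional $l_k$ with $U_k^{l_k}=U^{*}$ is the unique $k$-equilibrium distribution. (Uniqueness also drops out of the convexity theorem just proved: two distinct minimisers $l_1\neq l_2$ would give $U_k^{l_1}\neq U_k^{l_2}$, hence $(U_k^{l_1-l_2},U_k^{l_1-l_2})_k>0$, so $W_k\!\big(\tfrac{l_1+l_2}{2}\big)<\tfrac12\big(W_k(l_1)+W_k(l_2)\big)$ while $\tfrac{l_1+l_2}{2}\in Ch(E,q)$, a contradiction.)

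The steps demanding the most care are the functional-analytic preliminaries — verifying that $(\mathbf{D},(\cdot,\cdot)_k)$ is genuinely complete so that the Riesz theorem applies, and that the behaviour of Dirichlet-class functions at infinity in $R^3$ causes no trouble — together with the correct interpretation of the constraint $l(1)=q$ and its independence of the cut-off $\psi$, and the nonemptiness of $Ch(E,q)$. Once $\mathcal{A}$ has been identified as a nonempty closed affine subspace of $\mathbf{D}$, existence and uniqueness are exactly the standard nearest-point statement and require nothing further.
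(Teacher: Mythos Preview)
Your proof is correct and follows essentially the same route as the paper, which consists of a single sentence: ``Thanks to the known result from functional analysis it is enough to note that the set $Ch(E,q)$ is closed and convex.'' You have simply supplied the details behind that sentence --- identifying the ``known result'' as the Hilbert-space nearest-point theorem, transporting $Ch(E,q)$ into $(\mathbf{D},(\cdot,\cdot)_k)$ via the Riesz isomorphism, and verifying that its image is a nonempty closed affine subspace.
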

\begin{proof}
Thunks of the known result from functional analysis it is enough to note that the set 
$Ch(E,\,q)$ is close end convex.
\end{proof}
\begin{theorem}
If $E$ is a compact set and 
$q$ 
is a real number then the potential function 
$U_k^{l_k}$  
is constant inside of conductor.
\end{theorem}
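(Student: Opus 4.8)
The plan is to characterize the equilibrium distribution by a first-variation
condition and then to exploit that condition with a sufficiently rich family of
admissible perturbations. Since $Ch(E,q)$ is an affine set, its associated linear
space is $V=\{\mu\in\mathbf{D}^*:\ supp(\mu)\subseteq E,\ \mu(1)=0\}$, and for every
$\mu\in V$ and every real $t$ the distribution $l_k+t\mu$ again lies in $Ch(E,q)$.
First I would record that the map $l\mapsto U^l_k$ is linear: this is immediate from
the defining identity $l(\varphi)=(\varphi,U^l_k)_k$ together with the uniqueness in
the M. Riesz theorem (valid since $0<k\le k(E)$ makes $(\cdot,\cdot)_k$ an inner
product), so $U^{l_k+t\mu}_k=U^{l_k}_k+tU^{\mu}_k$. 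Substituting into $W_k$ and using
bilinearity and symmetry of $(\cdot,\cdot)_k$ gives
$W_k(l_k+t\mu)=W_k(l_k)+2t\,(U^{l_k}_k,U^{\mu}_k)_k+t^2(U^{\mu}_k,U^{\mu}_k)_k$, a
quadratic polynomial in $t$ whose cross term is finite because both potentials lie in
$\mathbf{D}$. Since $l_k$ minimizes $W_k$ over $Ch(E,q)$, the value $t=0$ minimizes
this polynomial over all real $t$, so its linear coefficient must vanish:
$(U^{l_k}_k,U^{\mu}_k)_k=0$ for every $\mu\in V$.

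Next I would translate this orthogonality into a statement about the potential
function itself. The bound $|\mu(\varphi)|\le M\|\varphi\|_{\mathbf{D}}$ allows $\mu$
to be evaluated on all of $\mathbf{D}$, and $U^{l_k}_k\in\mathbf{D}$; hence, applying
the defining relation of the potential to $\mu$ with the test function $U^{l_k}_k$ and
using symmetry of the form, we obtain $(U^{l_k}_k,U^{\mu}_k)_k=\mu(U^{l_k}_k)$.
Therefore $\mu(U^{l_k}_k)=0$ for every $\mu\in\mathbf{D}^*$ with $supp(\mu)\subseteq E$
and $\mu(1)=0$.

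The third step is to supply enough such perturbations to pin down $U^{l_k}_k$ on $E$.
Point charges cannot be used here, since $\delta\notin\mathbf{D}^*$; instead I would
use $L_2$ densities. For any $g\in L_2(R^3)$ with $supp(g)\subseteq E$ and
$\int_E g\,dv=0$, the functional $l_g$ belongs to $\mathbf{D}^*$: by the inequality
defining $k(E)$ one has $\int_E|\varphi|^2dv\le k^{-2}(E)\,\|\varphi\|_{\mathbf{D}}^2$,
whence $|l_g(\varphi)|=\bigl|\int_E g\varphi\,dv\bigr|\le\|g\|_{L_2(E)}
\bigl(\int_E|\varphi|^2dv\bigr)^{1/2}\le k^{-1}(E)\,\|g\|_{L_2(E)}\,\|\varphi\|_{\mathbf{D}}$;
moreover $supp(l_g)\subseteq E$ and $l_g(1)=\int_E g\,dv=0$, so $l_g\in V$. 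The
previous step then gives $\int_E g\,U^{l_k}_k\,dv=l_g(U^{l_k}_k)=0$.

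Finally, since $U^{l_k}_k\in\mathbf{D}\subset L_2(R^3)$ and
$\int_E g\,U^{l_k}_k\,dv=0$ for every $g\in L_2(E)$ with vanishing integral, the
restriction of $U^{l_k}_k$ to $E$ is orthogonal in $L_2(E)$ to the hyperplane of such
$g$, hence equals a constant almost everywhere on $E$; being an element of
$\mathbf{D}$, its distributional gradient then vanishes almost everywhere in the
interior of $E$, which is exactly the assertion that the potential is constant inside
the conductor, so that no force acts on a small test charge there. I expect the main
obstacle to be making the first-variation step fully rigorous --- justifying the
linearity of $l\mapsto U^l_k$, the quadratic expansion of $W_k$ along $l_k+t\mu$ with a
finite cross term, and especially the legitimacy of replacing the naive point-charge
perturbation (which is inadmissible because $\delta\notin\mathbf{D}^*$) by an $L_2$
density that still belongs to $\mathbf{D}^*$; the remaining arguments are routine
Hilbert-space geometry.
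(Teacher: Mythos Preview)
Your argument is correct and rests on the same variational idea as the paper's: minimality of $W_k$ at $l_k$ along perturbations $l_k+t\mu$ with $supp(\mu)\subseteq E$ and $\mu(1)=0$. The paper carries this out with a more concrete choice of $\mu$: for two disjoint balls $B(\vec{x}_1,r),\,B(\vec{x}_2,r)$ in the interior of $E$ it sets $\mu=l_r^1-l_r^2$, where $l_r^i$ is the normalized Lebesgue measure on $B(\vec{x}_i,r)$, obtains from the one-sided variation ($t>0$) the inequality $l_r^1(U_k^{l_k})\ge l_r^2(U_k^{l_k})$, and then lets $r\to 0$ to conclude $U_k^{l_k}(\vec{x}_1)\ge U_k^{l_k}(\vec{x}_2)$; symmetry in $\vec{x}_1,\vec{x}_2$ gives equality. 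Your version instead uses the full family of mean-zero $L_2(E)$ densities and two-sided variation to get $\int_E g\,U_k^{l_k}\,dv=0$ for all such $g$, hence a.e.\ constancy on $E$ by Hilbert-space orthogonality. The paper's route is shorter and identifies the constant at individual interior points, but tacitly uses that ball averages of $U_k^{l_k}$ converge pointwise there; your route sidesteps that regularity issue and in fact delivers a slightly stronger conclusion (a.e.\ constancy on all of $E$, not only in its interior). Your worry about the inadmissibility of $\delta$-perturbations is exactly what the paper handles by using ball averages before passing to the limit.
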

\begin{proof}
Let 
$B(\vec{x}_1, r)$ 
and 
$B(\vec{x}_2, r)$ 
be nonintersecting balls placed inside of 
$E$. 
Denote
\begin{equation*}
l_r^1(\varphi)=\frac{3}{4\pi r^3}\int_{B(\vec{x}_1,\,r)}\varphi dv,\,\,\,
l_r^2(\varphi)=\frac{3}{4\pi r^3}\int_{B(\vec{x}_2,\,r)}\varphi dv.
\end{equation*}
By definition we have
\begin{equation*}
W_k(l_k)\leq W_k(l_k+tl_r^1-tl_r^2). 
\end{equation*}
From this inequality for all 
$0<t$ 
it follows
\begin{equation*}
l_r^1\left(U_k^{l_1}\right)\geq l_r^2\left(U_k^{l_1}\right).
\end{equation*}
Passing to the limit when
$r\rightarrow +0$ 
we get
\begin{equation*}
U_k^{l_1}(\vec{x}_1)\geq U_k^{l_1}(\vec{x}_2).
\end{equation*}
\end{proof}
{\bf Example 1.} Let us give an example where we cannot avoid the generalized functions. Let
\begin{equation*}
E=\partial B(\vec{0},\,1)\cup \bar{B}(\vec{0},\,r_1) 
\left(\bigcup_{n=1}^{\infty}\bar{B}(\vec{0},\,r_{2n+1})\setminus B(\vec{0},\,r_{2n})\right),
\end{equation*}
where 
$0<r_1<r_2<\dots <1$ 
and 
$\lim_{n\to \infty}r_n=1$. 
Let us put a charge 
$q$ 
on the ball 
$\bar{B}(\vec{0},\,r_1)$. 
In the equilibrium state, the charge 
$q_n$ 
inducted on each sphere 
$\partial B(\vec{0},\,r_n)$. 
We have 
\begin{equation*}
q_{2n}+q_{2n+1}=0,\,\,\,n=1,2,\dots
\end{equation*}
The potential function can be presented in the form
\begin{equation*}
U(\vec{x})=\sum_{n=1}^{\infty}q_n\min\left(\frac{1}{r_n},\,\,\frac{1}{\|\vec{x}\|}\right).
\end{equation*}
Consequently, at the points 
\begin{equation*}
\vec{x}\in \bar{B}(\vec{0},\,r_{2n+1})\setminus B(\vec{0},\,r_{2n})
\end{equation*}
we have
\begin{equation*}
U(\vec{x})=\frac{1}{\|\vec{x}\|}\sum_{k=1}^{2n}q_n+
\sum_{k=n}^{\infty}\left(\frac{q_{2k+1}}{r_{2k+1}}+\frac{q_{2k}}{r_{2k}}\right).
\end{equation*}
Since the potential function is constant on each connected part, so
\begin{equation*}
\sum_{k=1}^{2n}q_k,\,\,\,n=1,2,\dots
\end{equation*}
Consequently, we have
\begin{equation*}
q_n=(-1)^{n+1}q_1.
\end{equation*}
From this follows, that the equilibrium distribution can not be a finite measure.
	Since the potential function is constant inside of conductor
\begin{equation*}
U_k^{l_k}(\vec{x})=A,\,\, \vec{x}\in E
\end{equation*}
so 
\begin{equation*}
l_k(\varphi)=-k^2\int_E \varphi U_k^{l_k}dv-
\int_{\partial E}\varphi\left(\frac{\partial U_k^{l_k}}{\partial\vec{n}_+}+
\frac{\partial U_k^{l_k}}{\partial\vec{n}_-}\right)ds-\frac{1}{4\pi}\int_E \varphi \Delta U_k^{l_k}dv=
\end{equation*}
\begin{equation*}
=-A\int_E \varphi dv-
\frac{1}{4\pi}\int_{\partial E}\varphi\frac{\partial U_k^{l_k}}{\partial\vec{n}_+}ds.
\end{equation*}
We have negative charges 
$l(1)=q$, 
so
\begin{equation*}
l_k(1)=q=-k^2A|E|+AC(E).
\end{equation*}
From this equation we get 
\begin{equation*}
A=\frac{q}{C(E)-k^2|E|}
\end{equation*}
Inside of the conductor we have only positive ions
\begin{equation*}
Q=-\frac{qk^2|E|}{C(E)-k^2|E|}
\end{equation*}
On the boundary we have only the negative electrons equal
\begin{equation*}
\hat{q}=\frac{qC(E)}{C(E)-k^2|E|}
\end{equation*}
Let us note, that if $k=0$ then all negative charges, which we 
bring on conductor, migrate to the conductor boundary. However, 
in the case $k>0$, some of "free electrons"  migrate to the boundary also.
{\bf Example 2.}
Let 
$E=\bar{B}(\vec{0},\,r)$. 
Since the conductor is  simmetry we have
\begin{equation*}
l_k(\varphi)=\frac{Q}{|B(\vec{0},\,r)|}\int_{B(\vec{0},\,r)}\varphi dv+
\frac{\hat{q}}{|\partial B(\vec{0},\,r)|}\int_{\partial B(\vec{0},\,r)}\varphi ds.
\end{equation*}
\section{forces on conductor}
\begin{definition}
Let 
$l\in \bf{D}^*$ 
and the 
$k$
- potential function 
$U_k^{l}(\vec{x})$ 
be differentiable at the point 
$\vec{x}$. 
Then on a small charge $e$, which is placed at the point  
$\vec{x}$ 
the force is defined by the formula
\begin{equation*}
\bar{F}_k(\vec{x})=
e\lim_{r\rightarrow 0+}\frac{4}{|B(\vec{x},\,r)|}
\int_{B(\vec{x},\,r)}U_k^l(\vec{y})\frac{\vec{x}-\vec{y}}{\|\vec{x}-\vec{y}\|}dv.
\end{equation*}
\end{definition}
Let us note that if 
$l=l_f$ 
and the potential function is differentiable then 
\begin{equation*}
\bar{F}_k(\vec{x})=-e\nabla U_k^{l_k}(\vec{x}).
\end{equation*}
{\bf Example 3.} Let on the ball 
$E=\bar{B}(\vec{0},\,r)$ 
we put a charge 
$q$ 
and this charge is in equilibrium state. On the small charge placed at the point 
$\vec{x}\notin B(\vec{0},\,r)$ 
out of the bll, acts the fource
\begin{equation*} 
\bar{F}_k(\vec{x})= -e\nabla U_k^{l_k}(\vec{x})=\frac{eq\vec{x}}{\|\vec{x}\|^3}.
\end{equation*}
	If the charge $e$ is inside of conductor at the point 
$\vec{x}\in B(\vec{0},\,r)$  
the force acting on that charge, equals zero. 
Let us note, that in the last case, the force corresponding only by electrical interaction equals
\begin{equation*} 
\bar{F}_k(\vec{x})= -e\nabla U_0^{l_k}(\vec{x})=-\frac{6eq\vec{x}}{r(3-4\pi k^2r^2)}.
\end{equation*}
Note that this forces have direction to the center of the ball.	
We assume, that the "free electrons" collide with the ions, 
which appear in conductor as a result of migration some of "free electrons" on the boundary.
Those collides are cause of new forces.
\begin{equation*} 
\bar{F}_k^c(\vec{x})= M\vec{x}.
\end{equation*}
This forces are directed from the centre. If
\begin{equation*} 
M=\frac{2Qk^2}{|B(\vec{0},\,r)|}.
\end{equation*}
this force balanced by the electric force. In the case 
$k>0$ 
the inclusion 
$supp(l_k)\subseteq \partial E$ 
is not valid. However, for the equilibrium distribution 
$l_k$, 
we have
\begin{equation*} 
supp(\vec{F}_k)\subseteq \partial E.
\end{equation*}
Since, potential function is constant, inside of conductor.
In 1836 year M. Faraday by experiment proved, that inside of conductor there is no forces. 
The same result in 1755 year observed BY. Franklin. M. Faraday, formulate the hypothesis, 
that all charges bring on conductor migrate to the boundary. 
Here we proved the effect about forces, but for the M. Faraday hypothesis we 
give a negative answer.

\section{Photoelectrical effect}

Photoeffect is connected with the emission of electrons on influence of photons.
This phenomenon in 1839 was observed by A. Bekkerel in electrolit. Further in 1887 
the same effect was investigated G. Hertz. He observed that lighting by 
ultra-violet rays of the  discharger, then the current increases.  
Physicists, in experiments, discovered the following laws: 

1. the number of emit electrons is proportional to the intensity of photon;

2. the energy, of pushing out electrons, is proportional of frequency of of photons;

3. there is a minimal photon energy, which can bring out electron from conductor.

	The theoretical explanation those laws was given by A. Einshtain in 1905. In this 
theory the photon energy equals $h\nu$, where $h$ is Plank's constant and $\nu$ 
is frequency of photon. 

	A part of falling photons are reflected from the boundary of conductor. Other part 
penetrated into metall and absorbed by electrons.The absorbed photons increased 
the electron energy. This permits electrons to leave metall. In 1921 Millicen 
proved the A. Einshtain laws in experiment.

	Here we will prove that the process of pushing out of electrons is possible to explain.
The essential moment in our model is pair of electrons placed in the distance
$\delta =1.45\cdot 10^{-8}$ m. 
The idea of "electron pair" appeared 1916 year in G. Lewis
paper [7]. Later this idea successfully applied by L. Cooper [4]
in superconductivity.	In equilibrium state part of electrons 
concentrated on the boundary. We assume that those electrons form 
pairs and the line passing through centres of electrons is orthogonal 
to the boundary of conductor.
{\bf Example.} Let 
$B(\vec{0},\,r)$ 
and 
$0<t<1$. 
The equilibrium distribution is
\begin{equation*}
l_k(\varphi)=\frac{Q}{|B(\vec{0},\,r)|}\int_{B(\vec{0},\,r)}\varphi dv+
\end{equation*}
\begin{equation*}
+\frac{\hat{q}t}{|\partial B(\vec{0},\,r)|}
\int_{\partial B(\vec{0},\,r)}\varphi ds+
\frac{\hat{q}(1-t)}{|\partial B(\vec{0},\,r+\delta)|}
\int_{\partial B(\vec{0},\,r+\delta)}\varphi ds.
\end{equation*}
By different reasons, some of "electron  pair" can go to the points
\begin{equation*}
\vec{x},\,\,\,\vec{x}+\delta \vec{n}_+(\vec{x}),\,\,\,\vec{x}\in B(\vec{0},\,R).
\end{equation*}
In the case 
$r<R<r+\delta$ 
on the electron, which is placed at the point 
$\vec{x}$ 
will act a force caused by positive ions inside of conductor which equals
\begin{equation*}
-\frac{qek^2|E|}{R^2(C(E)-k^2|E|)}\frac{\vec{x}}{\|\vec{x}\|}.
\end{equation*}
On inside placed electron act the force caused by electrons placed on the sphere 
$\partial B(\vec{0},\,r)$
which equals
\begin{equation*}
\frac{qek^2C(E)t}{R^2(C(E)-k^2|E|)}\frac{\vec{x}}{\|\vec{x}\|}.
\end{equation*}
Let us note that the electrons placed on the sphere 
$\partial B(\vec{0},\,r+\delta)$
do not act on this electron. Consequently, the total force equals
\begin{equation*}
\frac{qe(tC(E)-k^2|E|}{R^2(C(E)-k^2|E|)}\frac{\vec{x}}{\|\vec{x}\|}.
\end{equation*}
on this pair do not act forces caused by chaotic motion of "free electrons".
On the charge placed at the point 
$\vec{x}+\delta \vec{n}_+(\vec{x})$ 
act the following force
\begin{equation*}
\frac{qe}{(R+\delta)^2}\frac{\vec{x}}{\|\vec{x}\|}.
\end{equation*}
The resulting force on the pair equals
\begin{equation*}
qe\left(\frac{1}{(R+\delta)^2}-\frac{(tC(E)-k^2|E|}{R^2(C(E)-k^2|E|)}\right)
\frac{\vec{x}}{\|\vec{x}\|}.
\end{equation*}
If
\begin{equation*}
tC(E)-k^2|E| < 0
\end{equation*}
The parametre is defined from the equation
\begin{equation*}
\frac{tC(E)-k^2|E|}{C(E)-k^2|E|}
\frac{1}{(r+\delta)^2}+\frac{1}{(r+2\delta)^2}=0
\end{equation*}
or
\begin{equation*}
\frac{3t-4\pi k^2r^2}{3-4\pi k^2r^2}=
-\frac{(r+\delta)^2}{(r+2\delta)^2}
\end{equation*}
This equation mean that one of electron in pair  placed on the sphere
$\partial B(\vec{0},\,r+\delta)$ 
then the total force on the pair is zero. Consequently, for arbitrary 
$r<R<r+\delta$ 
we have
\begin{equation*}
\frac{3t-4\pi k^2r^2}{3-4\pi k^2r^2}\frac{1}{R^2}+\frac{1}{(R+\delta)^2}=
\end{equation*}
\begin{equation*}
=\frac{1}{R^2}\left(\frac{R^2}{(R+\delta)^2}- 
\frac{(r+\delta)^2}{(r+2\delta)^2}\right)<0\,\,.
\end{equation*}
So, the total force is directed to the centre of the ball and it block up the pair
to return to the original place.	If the displacement is bigger
$r+\delta<R$ 
then this pair will push
out the conductor. So, we come to the situation typical for
photoelectric effect. The minimal energy of photon, which can be 
cause of that perturbation, permits the estimation
\begin{equation*}
E>-qe\int_r^{r+\delta}\left(\frac{1}{(R+\delta)^2}-
\frac{(r+\delta)^2}{(r+2\delta)^2R^2}\right)dR\sim \frac{qe}{r^4}
\end{equation*} 
\section{Conclusion}

	Using brownian motions of "free electrons" we explain
that the boundary electons do not leave conductor.
We get the theoretical expanation of photoelectric effect too.


\begin{thebibliography}{99}
	\addcontentsline{toc}{chapter}{\bibname}
\bibitem{b:J}  
Jackson, J. D. (1962, 1975, 1998). Classical Electrodynamics. 
New York: John Wiley - Sons.
\bibitem{b: St}
Stein E.M. {\em Singular integrals and differentiability properties of functions}, 
Priceton University Press, New Jersey, 1970.
No 4. p. 499-511.
\bibitem{b: R}
Reed M. Simon B.{\em Methods of modern mathematical physics, 1. Functional analysis,} 
Academic press New Jork London, 1972. 
\bibitem{b: C}
Cooper L. {\em Microscopic quantum interference effects in the theory of superconductivity,}
Nobel lecture, December 11, 1972. 
\bibitem{b: W}
Wermer J. {\em Potential theory}, Springer verlag, Berlin, Heidelberg, New York, 1974.
\bibitem{b: Lew}
Lewis G. {\em The atom and the molecule} J. Amer. Chem. Soc. vol. 38, no 4, 1916
\bibitem{b: L}
Lieb E., Loss M. {\em Analisis} Amer. mathem. Soc. Providence, Rhode Island. 1998.
\end{thebibliography}
\end{document}